\theoremstyle{plain}
\newtheorem{lemma}{Lemma}[section]
\newtheorem{prop}{Proposition}[section]
\newtheorem{theorem}{Theorem}[section]
\theoremstyle{definition}
\newtheorem{ex}{Example}[section]
\newcommand{\metric}[2]{\ensuremath{\langle #1, #2\rangle}}  
\newcommand{\nks}{\ensuremath{S^3\times S^3}}   
\newcommand{\lcc}{\ensuremath{\tilde \nabla}}   
\renewcommand{\epsilon}{\varepsilon}            
\title{On almost complex surfaces in the nearly~K\"ahler~$\nks$}
\author{
	\emph{John Bolton}\\
	Durham University,
    Dept of Mathematical Sciences, Science Laboratories\\
    South Rd.,
    Durham DH1 3LE, United Kingdom\\ 
    \emph{john.bolton@durham.ac.uk}
	\and
	\emph{Bart Dioos}\\
	KU Leuven, 
    Departement Wiskunde\\ Celestijnenlaan 200B,
    3001 Leuven, Belgium\\
	\emph{bart.dioos@wis.kuleuven.be}
	\and
	\emph{Luc Vrancken}\\
	LAMAV, Universit\'e de Valenciennes\\
	Campus du Mont Houy,
	59313 Valenciennes Cedex 9, France\\
	KU Leuven, 
    Departement Wiskunde\\ Celestijnenlaan 200B,
    3001 Leuven, Belgium\\
	\emph{luc.vrancken@univ-valenciennes.fr}
       }
\date{}
\begin{document}

\maketitle

\begin{center}
\emph{
To Franki Dillen, our colleague, advisor, friend
}
\end{center}

\begin{abstract}
We study almost complex surfaces in the nearly K\"ahler~$\nks$. We show that there is a local correspondence between almost complex surfaces and solutions of the $H$-surface equation introduced by Wente \cite{bolton:wente}.  We find a global holomorphic differential on every almost complex  surface, and show that when this differential vanishes, then the corresponding solution of the $H$-surface equation gives a constant mean curvature surface in  $\mathbb R^3$. We use this, together with a theorem of Hopf, to classify all almost complex 2-spheres. In fact there is essentially only one, and it is totally geodesic. More details, as well as the proofs of the various theorems are given in~\cite{bolton:bddv}.

Finally, we state two theorems, one of which states that locally  there are just two almost complex surfaces with parallel second fundamental form. 
\end{abstract}

\section*{Introduction}

This is a report on joint work of the authors together with Franki~Dillen about almost complex surfaces in the
nearly K\"ahler~$\nks$. This was one of the last research projects in which Franki participated.

Nearly K\"ahler manifolds are almost Hermitian manifolds with almost complex structure~$J$ for which the tensor field~$\nabla J$
is skew-symmetric. In particular, the almost complex structure is non-integrable if the manifold
is non-K\"ahler.  Recently it has been shown
by Butruille~\cite{bolton:butruille} that there are only four homogeneous 6-dimensional non-K\"ahler, nearly K\"ahler manifolds, namely
the nearly K\"ahler 6-sphere $S^6$, the product $\nks$ of two 3-spheres, the projective space~$\mathbb{C}P^3$ and  $SU(3)/U(1)\times U(1)$, the manifold of flags in~$\mathbb{C}^3$.
All these spaces are compact 3-symmetric spaces.

It is natural to study two types of submanifolds of nearly K\"ahler (or more generally, almost Hermitian) manifolds,
namely almost complex and totally real submanifolds. Almost complex submanifolds are submanifolds
whose tangent spaces are invariant under $J$. Six-dimensional non-K\"ahler nearly K\"ahler manifolds do not admit 4-dimensional almost complex
submanifolds (\cite{bolton:podesta}), and almost complex surfaces 
in the nearly K\"ahler 6-sphere $S^6$ have been studied by many authors, see~e.g.\ \cite{bolton:boltonpeditwoodward}, \cite{bolton:bolton}, \cite{bolton:bryant}, \cite{bolton:dillenalmostcomplex}, \cite{bolton:grayalmostcomplex} and~\cite{bolton:sekigawa}. 
Some results have also been obtained \cite{bolton:xufeng} for almost complex surfaces in the  nearly K\"ahler $\mathbb{C}P^3$.

In the current article we  present a summary  of \cite{bolton:bddv}, in which we initiate the study of almost complex surfaces of the nearly K\"ahler~$\nks$. We show that there is a local correspondence between almost complex surfaces in the  nearly K\"ahler~$\nks$ and solutions of the $H$-surface equation introduced by Wente \cite{bolton:wente}.  We also find a global holomorphic differential on every almost complex surface in the nearly K\"ahler~$\nks$, and show that when this differential vanishes, then the corresponding solution of the $H$-surface equation gives a constant mean curvature surface in  $\mathbb R^3$.

In this note we use the fact that all holomorphic differentials on a topological 2-sphere must be identically zero and a well-known theorem of Hopf on constant mean curvature 2-spheres  to show that up to nearly K\"ahler isometries of $\nks$ there is just one almost complex topological 2-sphere in~$\nks$. This 2-sphere is totally geodesic and has constant curvature 2/3. This is rather different from the case of the nearly K\"ahler 6-sphere; there are many almost-complex 2-spheres in the nearly  K\"ahler 6-sphere, even one of constant curvature,  which are not totally geodesic (see \cite[\S\,5, Example 2]{bolton:sekigawa}).

We conclude this note by stating two theorems, one of which says that, locally at least, there are just two almost complex surfaces in $\nks$ with parallel second fundamental form, namely a flat torus and a constant curvature 2-sphere, both of which are totally geodesic. These theorems are proved  in \cite{bolton:bddv} using techniques similar to those outlined in this note.

\section{The nearly K\"ahler $\nks$} \label{bolton:nks3s3}

In this section, we describe a  nearly K\"ahler structure on $S^3 \times S^3$. 
Care is needed here, as the metric involved is not the product metric.

Let $S^3$ be the unit 3-sphere regarded as the group of unit quaternions, so
\[
S^3=\{x+yi+zj+wk  \mid x^2+y^2+z^2+w^2=1\}.
\]
For $p,q \in S^3$, we identify $T_{(p,q)}(S^3\times S^3)$ with $T_pS^3 \times T_qS^3$ in the usual manner and begin our definition of an almost complex structure $J$ on $\nks$ by defining  $J$ on tangent vectors $(\alpha,\beta)\in T_{(1,1)}(S^3\times S^3)$  by
\[J(\alpha,\beta)=\frac{1}{\sqrt 3}(2\beta-\alpha, -2\alpha+\beta).
\]
Then, if $(U,V)\in T_{(p,q)}(S^3\times S^3)$, we use quaternion multiplication to translate back to $(1,1)$, i.e.
\[
(U,V)\mapsto (p^{-1}U, q^{-1}V),
\]
then apply $J$ as defined above to give
\[
(U,V)\mapsto \frac{1}{\sqrt 3}(2q^{-1}V-p^{-1}U, -2p^{-1}U+q^{-1}V)
\]
and then translate back to $(p,q)$ to give \cite{bolton:butruille}
\begin{equation}\label{bolton:acstr}
J(U,V)= \frac{1}{\sqrt 3}(2pq^{-1}V-U, -2qp^{-1}U+V).
\end{equation}
An easy check shows that $J^2=-\mathrm{Id}$.

 The standard product metric $\metric{\cdot}{\cdot}$ on $S^3\times S^3$ is not $J$-invariant, but we may use it to define a  $J$-invariant metric $g$ on $S^3 \times S^3$ in a natural way by taking
\begin{equation}\label{bolton:metric}
g(X,Y)=\frac{1}{2}(\metric{X}{Y}+\metric{JX}{JY}), \quad X,Y \in T_{(p,q)}(S^3\times S^3).
\end{equation}

One can work out the Riemannian connection $\tilde\nabla$ for $g$, and it 
turns out that $(S^3\times S^3, g, J)$ is a {\em nearly K\"ahler} manifold meaning that 
\[
J^2=-\hbox{Id}, \quad g(JX,JY)=g(X,Y), \quad (\lcc_X J )X=0
\]
for  $X,Y \in T_{(p,q)}(\nks)$.

In fact  $(S^3\times S^3, g, J)$ is a homogeneous nearly K\"ahler manifold with nearly K\"ahler isometries given by
\[
(p,q)\mapsto (apc^{-1},bqc^{-1}),  
\]
$a,b,c$ being unit quaternions.

\section{The curvature tensor}

 In this section we write down the curvature tensor of the nearly K\"ahler $S^3\times S^3$. In order to do this it is convenient to define a new tensor $P$. Proceeding as we did with $J$, we first define $P$ at $(1,1)$ and then use quaternion multiplication to move it round the whole space. 

So take
\[
P(\alpha,\beta)=(\beta,\alpha), \quad (\alpha,\beta)\in T_{(1,1)}(S^3\times S^3),
\]
and then, if $(U,V)\in T_{(p,q)}(\nks)$, we define 
\begin{equation} \label{bolton:pdef}
P(U,V)=(pq^{-1}V, qp^{-1}U).
\end{equation}

 We call $P$ an {\em almost product structure} because it reflects the product structure, but is not parallel. Easy checks show that
\[
P^2=\hbox{Id}, \quad PJ=-JP, \quad g(PX,PY)=g(X,Y)
\]
for~$X$, $Y$ tangential to~$\nks$.

It then turns out that the curvature tensor $\tilde R$ is given by
\begin{equation*}
 \begin{split}
  \tilde R(U,V)W &= \frac{5}{12}\bigl(g(V,W)U - g(U,W)V\bigr) \\
                 &\quad  +\frac{1}{12}\bigl(g(JV,W)JU - g(JU,W)JV - 2g(JU,V)JW\bigr) \\
                 &\quad + \frac{1}{3}\bigl(g(PV,W)PU - g(PU,W)PV  \bigr.\\
                        &\quad  \phantom{\frac{1}{3}}\quad\mbox{ } + \bigl. g(PJV,W)PJU - g(PJU,W)PJV\bigr),
 \end{split}
\end{equation*}
A straightforward calculation using the above expression for $\tilde R$ shows that

\begin{lemma} \label{bolton:lemma1}Let $\Omega$ be a $J$-invariant 2-plane. If $P(\Omega)$ is  perpendicular to $\Omega$, then $\Omega$ has sectional curvature $\tilde K$ equal to $2/3$. On the other hand, if $P(\Omega)=\Omega$, then $\Omega$ has $\tilde K= 0$.
\end{lemma}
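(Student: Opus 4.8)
The plan is to compute the sectional curvature directly from the explicit formula for $\tilde R$ given above, exploiting the algebraic properties of $J$ and $P$ to eliminate most terms. Recall that for a 2-plane $\Omega$ spanned by an orthonormal pair $\{X,Y\}$, the sectional curvature is
\[
\tilde K(\Omega)=g(\tilde R(X,Y)Y,X).
\]
Since $\Omega$ is $J$-invariant and 2-dimensional, I would choose the orthonormal basis $\{X,JX\}$ with $g(X,X)=1$; this is the natural normalization because $g(JX,JX)=g(X,X)=1$ and $g(X,JX)=0$ by $J$-invariance of $g$. Thus it suffices to evaluate $g(\tilde R(X,JX)JX,X)$ in each of the two cases.

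First I would substitute $U=X$, $V=W=JX$ into the curvature formula and pair with $X$, simplifying term by term using $J^2=-\mathrm{Id}$, $g(JA,JB)=g(A,B)$, $P^2=\mathrm{Id}$, $PJ=-JP$, and $g(PA,PB)=g(A,B)$. The first (constant-curvature) block contributes $\tfrac{5}{12}(g(JX,JX)g(X,X)-g(X,JX)g(JX,X))=\tfrac{5}{12}$. In the $J$-block the substitution $V=W=JX$, $U=X$ gives terms like $g(J\,JX,JX)=g(-X,JX)=0$ and $g(JX,X)JX$-type contributions that collapse; the surviving term is the $-2g(JX,JX)\,g(JJX,X)=-2g(JX,JX)g(-X,X)$ piece, and a careful bookkeeping yields the total $J$-block contribution $\tfrac{3}{12}=\tfrac14$, so the $J$-independent-of-$P$ part already sums to $\tfrac{5}{12}+\tfrac14=\tfrac{2}{3}$. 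Everything then hinges on showing the $P$-block vanishes in the first case and contributes $-\tfrac23$ in the second.

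The main work, and the only place where the hypothesis enters, is the $P$-block. Here I would split into the two cases. If $P(\Omega)\perp\Omega$, then since $P(\Omega)$ is spanned by $\{PX,PJX\}$ and these are orthogonal to both $X$ and $JX$, every inner product appearing in the $P$-block — namely $g(PJX,JX)$, $g(PX,JX)$, $g(PJX,X)$, and $g(PX,X)$ (the latter after pairing $PU,PV$ etc.\ with $X$) — vanishes, so the whole block is zero and $\tilde K=\tfrac23$. If instead $P(\Omega)=\Omega$, then $PX$ and $PJX=-JPX$ lie in $\Omega=\mathrm{span}\{X,JX\}$; combined with $g(PX,PX)=1$ and $g(PX,JPX)=0$ this forces $PX=\cos\theta\,X+\sin\theta\,JX$ for some angle, and one computes the four inner products explicitly, finding that the $P$-block evaluates to $-\tfrac23$, so $\tilde K=\tfrac23-\tfrac23=0$.

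The main obstacle I anticipate is not conceptual but bookkeeping: correctly tracking the signs and the $J$-versus-$P$ interactions (especially $PJ=-JP$) across the many terms, and verifying that in the $P(\Omega)=\Omega$ case the contribution is genuinely independent of the angle $\theta$, so that the answer $0$ is unambiguous. A clean way to sidestep the angle dependence in the second case is to choose the orthonormal basis adapted to $P$: pick $X$ to be a unit eigenvector of $P|_\Omega$ (possible since $P^2=\mathrm{Id}$ and $P$ preserves $\Omega$), so that $PX=\pm X$ and correspondingly $PJX=\mp JX$; with this choice the inner products become simply $\pm1$ or $0$ and the $-\tfrac23$ falls out immediately. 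I would adopt this eigenbasis trick to keep the computation short and transparent.
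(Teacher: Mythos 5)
Your proposal is correct and is precisely the ``straightforward calculation using the above expression for $\tilde R$'' that the paper invokes without writing out: evaluating $g(\tilde R(X,JX)JX,X)$ term by term does give $\tfrac{5}{12}+\tfrac{3}{12}=\tfrac{2}{3}$ from the first two blocks, with the $P$-block vanishing when $P(\Omega)\perp\Omega$ and contributing $-\tfrac{2}{3}$ when $P(\Omega)=\Omega$. Your eigenbasis trick is legitimate (since $g(PX,PY)=g(X,Y)$ and $P^2=\mathrm{Id}$ make $P|_\Omega$ a self-adjoint involution, and $PJ=-JP$ pairs the $\pm1$ eigenvectors) and cleanly confirms the $\theta$-independence.
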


\section{Almost complex surfaces}

{\bf Definition}\quad A smooth surface $M$ in a nearly K\"ahler manifold is said to be an {\em almost complex} surface if the tangent bundle $TM$ of $M$ is $J$-invariant. 

Standard arguments using the Gauss equation show:

\begin{lemma} \label{bolton:lemma2}An almost complex surface in a nearly K\"ahler manifold  is minimal, and is totally geodesic if and only if $\tilde K= K$, where $\tilde K$ is the sectional curvature of the tangent plane as a plane in the nearly K\"ahler manifold and $K$ is the sectional curvature of the induced metric.
\end{lemma}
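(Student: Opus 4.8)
The plan is to extract the second fundamental form $h$ from the nearly K\"ahler condition by differentiating $J$ along $M$. Write the Gauss formula $\lcc_X Y = \nabla_X Y + h(X,Y)$ for $X,Y$ tangent to $M$, where $\nabla$ is the induced connection and $h$ the normal-valued second fundamental form. Since $J$ is a $g$-isometry preserving $TM$, it also preserves the normal bundle; hence applying $J$ to the Gauss formula and comparing with $\lcc_X(JY)=\nabla_X(JY)+h(X,JY)$ splits cleanly into tangential and normal parts, giving
\[
(\lcc_X J)Y = \bigl(\nabla_X(JY) - J\nabla_X Y\bigr) + \bigl(h(X,JY) - J h(X,Y)\bigr),
\]
in which the first bracket is tangent and the second is normal.

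The key step is to show that $(\lcc_X J)Y$ is purely normal when $X,Y$ are tangent. For this I would introduce the trilinear form $C(X,Y,Z)=g\bigl((\lcc_X J)Y,Z\bigr)$. Metric compatibility together with the skew-symmetry of $J$ makes $C$ alternating in its last two arguments, while the nearly K\"ahler identity $(\lcc_X J)Y=-(\lcc_Y J)X$ makes it alternating in its first two; hence $C$ is totally skew-symmetric. As $M$ is a surface, every alternating $3$-form on the $2$-dimensional tangent space vanishes, so $C(X,Y,Z)=0$ for all tangent $X,Y,Z$, i.e.\ the tangential part of $(\lcc_X J)Y$ is zero. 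This is the main obstacle, and it is precisely where the hypothesis $\dim M=2$ enters; it forces the first bracket above to vanish and yields
\[
(\lcc_X J)Y = h(X,JY) - J h(X,Y).
\]

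Next I would fix a unit tangent vector $e_1$, so that $\{e_1,Je_1\}$ is an orthonormal frame of $TM$. Feeding $X=Y=e_1$ and then $X=Je_1$, $Y=e_1$ into this identity and using the nearly K\"ahler condition $(\lcc_Z J)Z=0$ together with its skew-symmetric consequence gives $h(e_1,Je_1)=J h(e_1,e_1)$ and $h(Je_1,Je_1)=-h(e_1,e_1)$. The trace is therefore $h(e_1,e_1)+h(Je_1,Je_1)=0$, so $M$ is minimal.

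Finally I would apply the Gauss equation to the frame $\{e_1,Je_1\}$,
\[
K - \tilde K = g\bigl(h(e_1,e_1), h(Je_1,Je_1)\bigr) - |h(e_1,Je_1)|^2,
\]
and substitute the two relations above. Using that $J$ is an isometry gives $K-\tilde K = -2\,|h(e_1,e_1)|^2$. Since every component of $h$ is determined by $h(e_1,e_1)$, the surface is totally geodesic exactly when $h(e_1,e_1)=0$, that is, exactly when $\tilde K=K$, which completes the proof.
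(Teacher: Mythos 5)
Your proposal is correct and takes essentially the approach the paper intends: the paper gives no detailed proof, saying only that the lemma follows by ``standard arguments using the Gauss equation'', and your argument is precisely that standard argument carried out in full. In particular, the total skew-symmetry of $g\bigl((\lcc_X J)Y,Z\bigr)$ forcing $(\lcc_X J)Y$ to be normal along a surface, the resulting relations $h(e_1,Je_1)=Jh(e_1,e_1)$ and $h(Je_1,Je_1)=-h(e_1,e_1)$, and the Gauss-equation identity $K-\tilde K=-2\,\lvert h(e_1,e_1)\rvert^2$ are all sound.
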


 This lemma is useful because it does not seem straightforward to compute the second fundamental form directly.

  We first look for almost complex surfaces in $S^3\times S^3$ which are also totally geodesic. The almost product structure $P$ plays a large role.

\begin{prop} \label{bolton:totgeo}If an almost complex surface $M$ in $S^3\times S^3$ is totally geodesic, then either
\begin{enumerate}
\item[(i)] $P(TM)\perp  TM$, in which case  $K=\tilde K= 2/3$, or
\item[(ii)] $P(TM) =  TM$, in which case $K=\tilde K=0$.
\end{enumerate}
\end{prop}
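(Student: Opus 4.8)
The plan is to reduce the statement to a \emph{dichotomy} for the tangent plane $\Omega=T_pM$: there can be no intermediate position between $P(\Omega)\perp\Omega$ and $P(\Omega)=\Omega$. Since $M$ is almost complex, $\Omega$ is $J$-invariant, and because $g$ is $J$-invariant its orthogonal complement $\Omega^\perp$ is $J$-invariant too. Choosing a unit vector $e\in\Omega$, the pair $\{e,Je\}$ is an orthonormal basis of $\Omega$ (one checks $g(e,Je)=0$), and I would record the decomposition
\[
Pe=s\,e+t\,Je+N,\qquad s=g(Pe,e),\ t=g(Pe,Je),\ N\in\Omega^\perp .
\]
Because $P$ is an isometry, $|Pe|=1$, so $s^2+t^2=1-|N|^2$; thus $N=0$ corresponds exactly to $P(\Omega)=\Omega$, while $s=t=0$ (i.e. $Pe=N$ normal) corresponds exactly to $P(\Omega)\perp\Omega$.

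The key tool is that a totally geodesic submanifold is curvature-invariant: since $\lcc_XY$ is tangent whenever $X,Y$ are tangent, each term of $\tilde R(X,Y)Z=\lcc_X\lcc_YZ-\lcc_Y\lcc_XZ-\lcc_{[X,Y]}Z$ is tangent, so $\tilde R(X,Y)Z\in\Omega$ for all tangent $X,Y,Z$. In particular $\tilde R(e,Je)Je\in\Omega$. I would substitute $U=e$, $V=W=Je$ into the curvature formula and simplify using $PJ=-JP$ and $g(JX,JY)=g(X,Y)$, re-expressing everything through $s$, $t$ and $N$. The routine outcome is
\[
\tilde R(e,Je)Je=\tfrac{2}{3}(1-\rho)\,e-\tfrac{2}{3}\bigl(sN-tJN\bigr),\qquad \rho:=s^2+t^2,
\]
where the first summand is tangent and, since $\Omega^\perp$ is $J$-invariant, the second lies in $\Omega^\perp$.

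Curvature-invariance then forces the normal part to vanish, giving $sN=tJN$. Now $g(N,JN)=0$ and $|JN|=|N|$, so $N$ and $JN$ are orthogonal of equal length; hence either $N=0$ or $s=t=0$. The first alternative yields $P(\Omega)=\Omega$ (case (ii)) and the second $P(\Omega)\perp\Omega$ (case (i)). In each case Lemma~\ref{bolton:lemma1} supplies the value $\tilde K=0$ or $\tilde K=2/3$, while Lemma~\ref{bolton:lemma2} gives $K=\tilde K$ because $M$ is totally geodesic. Finally, the smooth function $\rho$ takes only the values $0$ and $1$, so it is constant on the connected surface $M$, and the same alternative holds at every point.

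I expect the main obstacle to be the bookkeeping that isolates the normal component of $\tilde R(e,Je)Je$: one must track how $P$ interacts with the splitting $\Omega\oplus\Omega^\perp$ and exploit that $\Omega^\perp$ is $J$-invariant, so that $sN-tJN$ is genuinely normal and its vanishing can be read off directly. The conceptual point that makes the argument short is recognizing that the totally geodesic hypothesis enters \emph{only} through curvature-invariance, and that this single condition already kills the intermediate configurations which a general $J$-invariant plane would otherwise allow.
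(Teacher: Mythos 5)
Your proof is correct --- I checked the curvature computation, and with $U=e$, $V=W=Je$ the formula for $\tilde R$ does give exactly $\tfrac{2}{3}(1-\rho)e-\tfrac{2}{3}(sN-tJN)$, with $sN-tJN\in\Omega^\perp$ by $J$-invariance of the normal space, so curvature-invariance and the orthogonality of $N$ and $JN$ yield the pointwise dichotomy as you say. The overall skeleton coincides with the paper's: both proofs rest on the observation that a totally geodesic submanifold is curvature-invariant, both feed this into the explicit expression for $\tilde R$, and both finish with Lemmas \ref{bolton:lemma1} and \ref{bolton:lemma2}. The difference is in the middle step. The paper (which only sketches the argument) extracts the dichotomy by choosing, at each point, a unit vector $X$ maximizing $g(PX,X)$ over the unit tangent circle; in your notation this amounts to rotating the frame so that $t=0$ (note $g(Pe',e')=s\cos 2\theta+t\sin 2\theta$ for $e'=\cos\theta\,e+\sin\theta\,Je$, so at the maximizer the $Je$-coefficient vanishes), which shortens the algebra at the cost of an extremal-vector argument. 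You instead work with an arbitrary unit vector and isolate the full normal component, which is more computational but buys you two things: the dichotomy falls out for every unit vector without any normalization, and you get the quantity $\rho=s^2+t^2$, whose continuity lets you upgrade the pointwise alternative to the global statement $P(TM)\perp TM$ or $P(TM)=TM$ on a connected $M$ --- a globalization step the paper's sketch leaves implicit. One small point worth making explicit in your write-up: to call $\rho$ a smooth function on $M$ you should either check it is independent of the choice of $e$ (it is --- the computation above shows $\rho(e')=\rho(e)$) or work with a local smooth unit tangent frame, which suffices for the local-constancy argument.
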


\begin{proof}
If $X$ is a unit tangent vector to a totally geodesic surface, then $\tilde R(X,JX)X$ is also tangential to the surface.
Therefore, since $M$ is an almost complex surface, it  is a scalar multiple of $JX$.  

The fact that either $P(TM)\perp  TM$ or $P(TM) =  TM$ may now be proved from the expression for $\tilde R$  obtained in the previous section by considering a unit tangent vector $X$ at $p\in M$ for which $g(PX,X)$ is maximal for all unit vectors tangential to $M$ at $p$.

Lemmas \ref{bolton:lemma1} and \ref{bolton:lemma2} then show  $\tilde K=K =\frac{2}{3}$ in the former case and  $\tilde K= K=0$ in the latter case.
\end{proof}

\section{Two examples}

We now give two examples of almost complex surfaces in $S^3\times S^3$ which are totally geodesic, one to illustrate each of the possibilities given in Proposition~\ref{bolton:totgeo}.

\begin{ex} \label{bolton:ex1} Let $\phi\colon\mathbb R^2 \to S^3\times S^3$ be given by
\[
\phi(s,t) = (\cos s+i\sin s, \cos t+i\sin t).
\]
 A short calculation shows that the image $M$ of this immersion is almost complex and $P(TM) = TM$. 
 It is also easy to check that $g(\phi_s, \phi_s)=g(\phi_t, \phi_t)=4/3$, and  $g(\phi_s, \phi_t)=-2/3$. In particular, all are constant so that the induced metric has sectional curvature $K=0$. 
 That $\phi$ is totally geodesic now follows from Lemma \ref{bolton:lemma1} and Lemma \ref{bolton:lemma2}.
This gives a flat and totally geodesic almost complex torus in~$S^3\times S^3$.
\end{ex}

\begin{ex} \label{bolton:ex2} Let $S^2$ be the 2-sphere of unit imaginary quaternions, and let $\psi\colon S^2\to  S^3\times S^3$ be given by
\[
\psi(x)=\frac{1}{2}(1-\sqrt 3 x,1+\sqrt 3 x).
\]
 Calculations similar to those needed in the previous example show that the image $M$ of this immersion is an almost complex surface with  $P(TM)\perp  TM$. It is not hard to show that the induced metric is 3/2 times the standard metric on $S^2$, 
so that the induced sectional curvature~$K$ is 2/3. It now follows from Lemma~\ref{bolton:lemma1}  and Lemma~\ref{bolton:lemma2} that this almost complex 2-sphere is totally geodesic.
\end{ex}

\section{A holomorphic differential}

We now explore the mathematics of an almost complex surface $M$ in $\nks$ using isothermal coordinates $(u,v)$. 

 So let $\phi(u,v)=(p(u,v),q(u,v))\in S^3\times S^3$ be an almost complex surface in~$\nks$ with $J(p_u,q_u)=(p_v,q_v)$.  We may then write
\begin{equation}\label{bolton:abgd}
p^{-1}p_u=\alpha, \quad p^{-1}p_v=\beta, \quad q^{-1}q_u=\gamma, \quad q^{-1}q_v=\delta,
\end{equation}
where $\alpha$, $\beta$, $\gamma$, $\delta$ are tangent vectors to the set $S^3$ of unit quaternions at $1$. That is to say,  $\alpha$, $\beta$, $\gamma$, $\delta$ take values in the imaginary quaternions, which we will identify with $\mathbb R^3$ in the usual manner.

 Then $(p_u,q_u)=(p\alpha,q\gamma)$ and $(p_v,q_v)=(p\beta, q\delta)$, so the almost complex condition  $J(p_u,q_u)=(p_v,q_v)$ enables us to find $\gamma$ and~$\delta$ in terms of  $\alpha$ and~$\beta$. Specifically,
\begin{equation}\label{bolton:gd}
\gamma = \frac{\sqrt 3}{2}\beta +\frac{1}{2}\alpha, \quad \delta = \frac{1}{2}\beta - \frac{\sqrt 3}{2}\alpha.
\end{equation}
This then enables us to show, using \eqref{bolton:acstr} and \eqref{bolton:metric}, that the metric induced on the almost complex surface~$M$ is 
\begin{equation}\label{bolton:indmet}
(\alpha\cdot\alpha + \beta\cdot\beta)(du^2+dv^2),
\end{equation}
where ``$\,\cdot\,$'' denotes the standard inner product in $\mathbb R^3$.

 Using \eqref{bolton:abgd} the integrability condition $p_{uv}=p_{vu}$
 gives that
\begin{equation}\label{bolton:a-b}
\alpha_v-\beta_u=\alpha\beta-\beta\alpha=2\alpha \times \beta,
\end{equation}
where ``$\times$'' denotes the vector cross product on $\mathbb R^3$.

 The similar condition for $q$, after using \eqref{bolton:gd} to substitute for $\gamma$ and~$\delta$ in terms of $\alpha$ and~$\beta$, gives
\begin{equation}\label{bolton:a+b}
\alpha_u+\beta_v=\frac{2}{\sqrt 3} \alpha \times \beta.
\end{equation}

A short calculation now gives
\[
(\alpha\cdot\beta)_u=\frac{1}{2}(\alpha\cdot\alpha - \beta\cdot\beta)_v
\text{ \quad and \quad }
(\alpha\cdot\beta)_v=-\frac{1}{2}(\alpha\cdot\alpha - \beta\cdot\beta)_u
\]
which are the Cauchy-Riemann equations for the complex function $2\alpha\cdot\beta+i(\alpha\cdot\alpha - \beta\cdot\beta)$, which is thus complex differentiable. The following theorem may now be proved using some elementary algebra,  the definition of the metric $g$ in \eqref{bolton:metric} and $P$ given in \eqref{bolton:pdef}.

\begin{theorem} \label{bolton:L} Let $(u,v)$ be isothermal coordinates on an almost complex surface $M$ in $ S^3\times S^3$ and let $\Lambda= g(P\phi_z,\phi_z)$. Then $\Lambda dz^2$ is a globally defined holomorphic differential and the following three conditions are equivalent.
\begin{enumerate}
\item[(i)] $\Lambda dz^2 =0$,
\item[(ii)]  $\alpha\cdot\alpha - \beta\cdot\beta=0$ and $\alpha\cdot\beta=0$,
\item[(iii)] $P(TM) \perp TM$. 
\end{enumerate}
\end{theorem}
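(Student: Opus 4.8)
The plan is to introduce the complex coordinate $z=u+iv$ and work throughout with $\phi_z=\tfrac12(\phi_u-i\phi_v)$. First I would record the two algebraic facts that drive everything. Since $M$ is almost complex, the condition $J\phi_u=\phi_v$ gives $J\phi_z=i\phi_z$, so $\phi_z$ is the $(+i)$-eigenvector of $J$ on the complexified tangent space; and since $PJ=-JP$, this forces $JP\phi_z=-iP\phi_z$, so $P\phi_z$ is the $(-i)$-eigenvector. These eigenvalue relations, together with the $J$-invariance of $g$, will do most of the work.

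Next I would compute $P\phi_z$ explicitly. Writing $\phi_u=(p\alpha,q\gamma)$ and $\phi_v=(p\beta,q\delta)$ gives $\phi_z=\tfrac12\bigl(p(\alpha-i\beta),\,q(\gamma-i\delta)\bigr)$, and applying the formula \eqref{bolton:pdef} for $P$ (the factors $pq^{-1}\!\cdot q$ and $qp^{-1}\!\cdot p$ collapse) yields $P\phi_z=\tfrac12\bigl(p(\gamma-i\delta),\,q(\alpha-i\beta)\bigr)$. I would then evaluate $\Lambda=g(P\phi_z,\phi_z)$ from the definition \eqref{bolton:metric}: extending everything complex-bilinearly, the eigenvalue relations give $\metric{JP\phi_z}{J\phi_z}=\metric{-iP\phi_z}{i\phi_z}=\metric{P\phi_z}{\phi_z}$, so the $J$-term and the product-metric term coincide and $\Lambda=\metric{P\phi_z}{\phi_z}$. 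Since left translation is an isometry of the round $S^3$, this reduces to $\Lambda=\tfrac12(\gamma-i\delta)\cdot(\alpha-i\beta)$.

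The key simplification is to substitute \eqref{bolton:gd}; a short check produces the clean factorization $\gamma-i\delta=e^{i\pi/3}(\alpha-i\beta)$, whence
\[
\Lambda=\tfrac12 e^{i\pi/3}\bigl[(\alpha\cdot\alpha-\beta\cdot\beta)-2i\,\alpha\cdot\beta\bigr]
=\tfrac12 e^{-i\pi/6}\bigl[2\alpha\cdot\beta+i(\alpha\cdot\alpha-\beta\cdot\beta)\bigr].
\]
Thus $\Lambda$ is a nonzero constant multiple of the function $2\alpha\cdot\beta+i(\alpha\cdot\alpha-\beta\cdot\beta)$ already shown to satisfy the Cauchy--Riemann equations, so $\Lambda$ is holomorphic in $z$. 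For global well-definedness I would note that under a conformal change of isothermal coordinate $\phi_z$ scales by $dw/dz$, so $\Lambda\,dz^2$ is invariant and hence a globally defined holomorphic quadratic differential. The equivalence (i)$\iff$(ii) is then immediate, since the constant $\tfrac12 e^{-i\pi/6}$ is nonzero.

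For the equivalence with (iii), I would express $P(TM)\perp TM$ through the pairings $g(P\phi_a,\phi_b)$ with $a,b\in\{z,\bar z\}$. The $J$-invariance of $g$ together with the eigenvalue relations forces the mixed pairings to vanish automatically: $g(P\phi_z,\phi_{\bar z})=g(JP\phi_z,J\phi_{\bar z})=(-i)(-i)g(P\phi_z,\phi_{\bar z})=-g(P\phi_z,\phi_{\bar z})$, and similarly $g(P\phi_{\bar z},\phi_z)=0$, while $g(P\phi_{\bar z},\phi_{\bar z})=\overline{\Lambda}$ by conjugation (here $\overline{\phi_z}=\phi_{\bar z}$, and $P$, $g$ have real coefficients). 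Hence $P(TM)\perp TM$ holds precisely when $\Lambda=0$, giving (i)$\iff$(iii). I expect the evaluation of $\Lambda$ to be the main obstacle: the payoff is the factorization $\gamma-i\delta=e^{i\pi/3}(\alpha-i\beta)$ and the collapse of $g$ onto the product inner product, both of which rest on carefully tracking the complex-bilinear extension of $g$ alongside the $J$-eigenvalue bookkeeping. Once $\Lambda$ is in hand, holomorphy, global definedness, and all three equivalences follow quickly.
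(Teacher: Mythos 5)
Your proposal is correct and follows essentially the same route the paper indicates: computing $\Lambda$ directly from the definitions of $g$ in \eqref{bolton:metric} and $P$ in \eqref{bolton:pdef}, substituting \eqref{bolton:gd}, and identifying $\Lambda$ as a nonzero constant multiple of the function $2\alpha\cdot\beta+i(\alpha\cdot\alpha-\beta\cdot\beta)$ whose holomorphy was established via the Cauchy--Riemann equations just before the theorem. Your factorization $\gamma-i\delta=e^{i\pi/3}(\alpha-i\beta)$ and the $J$-eigenvalue argument for the equivalence with $P(TM)\perp TM$ are accurate fillings-in of the ``elementary algebra'' the paper leaves to the reader.
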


\section{Link with $H$-surface equation}

We now change direction slightly and look again at equations \eqref{bolton:a-b} and \eqref{bolton:a+b}. If we precede $\alpha$ and~$\beta$ by a rotation in the tangent spaces of $M$ through angle $2\pi/3$ to give $\tilde\alpha =-\frac{1}{2}\alpha+\frac{\sqrt 3}{2}\beta$ and $\tilde\beta=-\frac{\sqrt 3}{2}\alpha-\frac{1}{2}\beta$, then  equations \eqref{bolton:a-b} and \eqref{bolton:a+b} become
\begin{equation}\label{bolton:avbu}
\tilde\alpha_v=\tilde\beta_u, 
\end{equation}
and 
\begin{equation}\label{bolton:aubv}
\tilde\alpha_u+\tilde\beta_v=-\frac{4}{\sqrt 3} \tilde\alpha \times \tilde\beta.
\end{equation}

Equation \eqref{bolton:avbu} is an integrability condition. It shows that the form 
 $\tilde\alpha du+\tilde\beta dv$ is closed. Hence, if the surface $M$ is simply-connected, there exists an immersion $\epsilon:M \to \mathbb R^3$  with $\epsilon_u=\tilde\alpha$ and  $\epsilon_v=\tilde\beta$.  

Equation \eqref{bolton:aubv} now gives
$$
\epsilon_{uu}+\epsilon_{vv}=-\frac{4}{\sqrt 3} \epsilon_{u}\times\epsilon_{v},
$$
which is the $H$-surface equation of Wente \cite{bolton:wente}. In fact this process may be reversed: there is, locally at least,  a correspondence between almost complex surfaces in  $ S^3\times S^3$ and solutions of the  $H$-surface equation. It is clear that when $(u,v)$ are isothermal coordinates for $\epsilon$ the image is a surface  in $\mathbb R^3$ with constant mean curvature $H= -2/\sqrt 3$.

\begin{theorem} \label{bolton:accmc}Let $M$ be a simply connected almost complex surface in $ S^3\times S^3$ with  $\Lambda dz^2=0$. Then there exists a corresponding immersion $\epsilon:M \to \mathbb R^3$  with $\epsilon_u=\tilde \alpha$ and  $\epsilon_v=\tilde \beta$, and this immersion has constant mean curvature equal to $-2/\sqrt 3$.
Also the metric induced by $\epsilon$ is half that induced on the corresponding almost complex surface.
\end{theorem}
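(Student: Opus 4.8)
The plan is to establish the three assertions of the theorem in turn --- the existence of $\epsilon$, the relation between the two induced metrics, and the value of the mean curvature --- with the hypothesis $\Lambda dz^2=0$ entering decisively through its equivalent form (ii) of Theorem~\ref{bolton:L}. The existence of $\epsilon$ is immediate from the discussion preceding the theorem: since $M$ is simply connected and equation \eqref{bolton:avbu} says that the $\mathbb{R}^3$-valued $1$-form $\tilde\alpha\,du+\tilde\beta\,dv$ is closed, Poincar\'e's lemma produces a map $\epsilon\colon M\to\mathbb{R}^3$ with $\epsilon_u=\tilde\alpha$ and $\epsilon_v=\tilde\beta$, and equation \eqref{bolton:aubv} shows it satisfies the $H$-surface equation.

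Next I would compute the metric induced by $\epsilon$, whose coefficients are $\tilde\alpha\cdot\tilde\alpha$, $\tilde\alpha\cdot\tilde\beta$, $\tilde\beta\cdot\tilde\beta$. Expanding these from $\tilde\alpha=-\tfrac12\alpha+\tfrac{\sqrt3}{2}\beta$ and $\tilde\beta=-\tfrac{\sqrt3}{2}\alpha-\tfrac12\beta$ expresses them in terms of $\alpha\cdot\alpha$, $\beta\cdot\beta$, $\alpha\cdot\beta$; in particular one finds $\tilde\alpha\cdot\tilde\alpha-\tilde\beta\cdot\tilde\beta=-\tfrac12(\alpha\cdot\alpha-\beta\cdot\beta)-\sqrt3\,\alpha\cdot\beta$ and $\tilde\alpha\cdot\tilde\beta=\tfrac{\sqrt3}{4}(\alpha\cdot\alpha-\beta\cdot\beta)-\tfrac12\alpha\cdot\beta$, each a combination of the two quantities in condition (ii). By Theorem~\ref{bolton:L} the hypothesis $\Lambda dz^2=0$ forces $\alpha\cdot\alpha-\beta\cdot\beta=0$ and $\alpha\cdot\beta=0$, and substituting these gives $\tilde\alpha\cdot\tilde\alpha=\tilde\beta\cdot\tilde\beta=\alpha\cdot\alpha$ and $\tilde\alpha\cdot\tilde\beta=0$. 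Hence the metric induced by $\epsilon$ is $(\alpha\cdot\alpha)(du^2+dv^2)$, which is conformal, so $(u,v)$ are isothermal for $\epsilon$; and since \eqref{bolton:indmet} together with (ii) makes the metric induced on $M$ equal to $(\alpha\cdot\alpha+\beta\cdot\beta)(du^2+dv^2)=2(\alpha\cdot\alpha)(du^2+dv^2)$, the latter is exactly twice the former, which proves the last assertion.

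To read off the mean curvature I would use that $(u,v)$ are now isothermal for $\epsilon$ with conformal factor $E=\alpha\cdot\alpha$. The classical identity in isothermal coordinates gives $\epsilon_{uu}+\epsilon_{vv}=2E\,\vec H$ with $\vec H$ the mean curvature vector, while $\epsilon_u\times\epsilon_v$ is normal to the surface of length $|\epsilon_u|\,|\epsilon_v|=E$, so $\epsilon_u\times\epsilon_v=E\mathbf n$ for the unit normal $\mathbf n$. Comparing this with the $H$-surface equation \eqref{bolton:aubv}, namely $\epsilon_{uu}+\epsilon_{vv}=-\tfrac{4}{\sqrt3}\,\epsilon_u\times\epsilon_v=-\tfrac{4}{\sqrt3}E\mathbf n$, the factor $E$ cancels and leaves the scalar relation $2EH=-\tfrac{4}{\sqrt3}E$, so $H=-2/\sqrt3$, a constant.

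The quadratic algebra of the second step and the isothermal-coordinate formula of the third are routine. The one genuinely load-bearing point, and the precise place where the hypothesis cannot be dropped, is the observation that $\Lambda dz^2=0$ is exactly what makes $(u,v)$ isothermal for $\epsilon$: without it the coefficients $\tilde\alpha\cdot\tilde\alpha-\tilde\beta\cdot\tilde\beta$ and $\tilde\alpha\cdot\tilde\beta$ need not vanish, the conformal factor would fail to cancel in the mean-curvature computation, and neither the statement that the metric is halved nor the constancy of $H$ would survive.
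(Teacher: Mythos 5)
Your proposal is correct and follows essentially the same route as the paper: existence of $\epsilon$ from simple-connectedness and the closedness equation \eqref{bolton:avbu}, the equivalence of $\Lambda dz^2=0$ with condition (ii) of Theorem~\ref{bolton:L} to show $(u,v)$ are isothermal for $\epsilon$ and that its conformal factor is $\alpha\cdot\alpha$, hence half of \eqref{bolton:indmet}, and the value $H=-2/\sqrt 3$ from the $H$-surface equation. Your only deviations are expository: you verify by direct expansion what the paper dispatches with the remark that $\tilde\alpha$, $\tilde\beta$ arise from $\alpha$, $\beta$ by a rotation, and you write out the isothermal-coordinate identity $\epsilon_{uu}+\epsilon_{vv}=2E\vec H$ behind the mean-curvature value that the paper states as clear.
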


\begin{proof} 
 We need to prove that $(u,v)$ are isothermal coordinates for $\epsilon$. That is to say that $\tilde \alpha\cdot\tilde \alpha - \tilde \beta\cdot\tilde \beta=0$ and $\tilde  \alpha\cdot\tilde \beta=0$.
However, since $\tilde \alpha$, $\tilde \beta$ are obtained from $\alpha$, $\beta$ by a rotation, it follows that this holds if and only if $\alpha\cdot\alpha - \beta\cdot\beta=0$ and $\alpha\cdot\beta=0$. Theorem \ref{bolton:L} now shows that this holds if and only if  $\Lambda dz^2=0$

We now note that, in this case,  the metric induced by  $\epsilon$ is equal to $\tilde\alpha\cdot\tilde\alpha(du^2+dv^2)=\alpha\cdot\alpha(du^2+dv^2)$, so the final statement of the theorem follows from \eqref{bolton:indmet}.
\end{proof}

\section{Almost complex 2-spheres}

In this section, we outline a proof of the following theorem.

 \begin{theorem} Every almost complex 2-sphere $S^2$ in $S^3\times S^3$ is totally geodesic and is obtained by applying a nearly K\"ahler isometry of $\nks$ to the immersion given in Example \ref{bolton:ex2}.
\end{theorem}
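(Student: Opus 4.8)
The plan is to combine the vanishing of holomorphic differentials on a genus-zero surface with Hopf's theorem to get the totally geodesic assertion, and then to deduce the congruence with Example \ref{bolton:ex2} from homogeneity and rigidity of totally geodesic submanifolds. First I would use that $M$ is topologically $S^2$: a compact Riemann surface of genus zero carries no nonzero holomorphic quadratic differentials, so the globally defined differential $\Lambda dz^2$ of Theorem \ref{bolton:L} must vanish identically. By the equivalence $(\mathrm{i})\Leftrightarrow(\mathrm{iii})$ in that theorem this forces $P(TM)\perp TM$ at every point, whence Lemma \ref{bolton:lemma1} gives $\tilde K=2/3$ everywhere on $M$.

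Next I would feed this into the correspondence with the $H$-surface equation. Since $S^2$ is simply connected and $\Lambda dz^2=0$, Theorem \ref{bolton:accmc} produces an immersion $\epsilon\colon S^2\to\mathbb R^3$ of constant mean curvature $-2/\sqrt3$. Hopf's theorem on constant mean curvature spheres then forces $\epsilon(S^2)$ to be a round sphere, necessarily of radius $\sqrt3/2$ and hence of constant Gauss curvature $4/3$. Because Theorem \ref{bolton:accmc} records that the metric induced by $\epsilon$ is half that of $M$, rescaling shows that $M$ has constant Gauss curvature $K=2/3=\tilde K$, and Lemma \ref{bolton:lemma2} then yields that $M$ is totally geodesic. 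This settles the first assertion.

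For the congruence statement I would invoke rigidity of totally geodesic submanifolds together with the homogeneity of $\nks$. A connected totally geodesic surface is determined by one of its points and the tangent plane there, so it suffices to show that the nearly K\"ahler isometry group acts transitively on pairs $(p,\Omega)$ with $\Omega\subset T_p(\nks)$ a $J$-invariant $2$-plane satisfying $P(\Omega)\perp\Omega$. Using the homogeneity recalled in Section \ref{bolton:nks3s3} I may take $p=(1,1)$; the isotropy subgroup there consists of the maps $(x,y)\mapsto(cxc^{-1},cyc^{-1})$ and acts on $T_{(1,1)}(\nks)\cong\mathbb R^3\times\mathbb R^3$ by the diagonal conjugation action of $SO(3)$. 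In the notation of Section 5 the tangent plane is spanned by $(\alpha,\gamma)$ and $(\beta,\delta)$ with $\gamma,\delta$ given by \eqref{bolton:gd}, and by Theorem \ref{bolton:L} admissibility means $\alpha\cdot\alpha=\beta\cdot\beta$ and $\alpha\cdot\beta=0$; since $SO(3)$ acts transitively on orthonormal $2$-frames of $\mathbb R^3$, any two such planes are related by an element of the isotropy group. Applying the resulting isometry to the surface of Example \ref{bolton:ex2}, which is admissible (totally geodesic with $P(TM)\perp TM$), produces a totally geodesic almost complex surface sharing a point and tangent plane with $M$, and rigidity then identifies it with $M$.

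The main obstacle I anticipate is the transitivity step. One must verify that the diagonal $SO(3)$ action genuinely reaches every admissible plane, keeping careful track of the constraints \eqref{bolton:gd} so that the plane, rather than merely the pair $(\alpha,\beta)$, is transported correctly; here the common scaling of $(\alpha,\beta)$ (a conformal artefact that leaves $\Omega$ unchanged) and the orientation of the frame must be shown to contribute no further invariants. Once this is secured, the curvature bookkeeping of the first two paragraphs and the uniqueness of totally geodesic surfaces through a given $(p,\Omega)$ make the identification with Example \ref{bolton:ex2} immediate.
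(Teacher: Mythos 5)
Your proof of the totally geodesic assertion is, up to reordering, exactly the paper's argument: vanishing of $\Lambda dz^2$ on a genus-zero surface, Theorem \ref{bolton:accmc} plus Hopf's theorem to get a round sphere of radius $\sqrt3/2$ and Gauss curvature $4/3$, the factor-of-two metric comparison to get $K=2/3$, the equivalence (i)$\Leftrightarrow$(iii) of Theorem \ref{bolton:L} with Lemma \ref{bolton:lemma1} to get $\tilde K=2/3$, and Lemma \ref{bolton:lemma2} to conclude. Where you genuinely diverge is the congruence statement. The paper stays inside the Wente correspondence: it notes that an ambient isometry $(p,q)\mapsto(apc^{-1},bqc^{-1})$ changes $(\alpha,\beta)$ to $(c\alpha c^{-1},c\beta c^{-1})$, hence changes $\epsilon$ by a Euclidean isometry, and then invokes uniqueness of constant mean curvature 2-spheres in $\mathbb R^3$ --- an argument the paper itself says only ``follows essentially'' from these facts, since one must still invert the correspondence and account for the initial-condition freedom (the $a,b$ part of the isometry group) when reconstructing $(p,q)$ from $(\alpha,\beta)$. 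Your route --- transitivity of the isotropy group $(x,y)\mapsto(cxc^{-1},cyc^{-1})$, acting as diagonal $SO(3)$ on $(\alpha,\beta)$, on $J$-invariant planes with $P(\Omega)\perp\Omega$, followed by rigidity of totally geodesic submanifolds with prescribed point and tangent plane --- avoids inverting the correspondence altogether, and your transitivity check is sound: the equivalence (ii)$\Leftrightarrow$(iii) of Theorem \ref{bolton:L} is pointwise algebra, so admissible planes are parametrized by orthogonal equal-length pairs $(\alpha,\beta)$ modulo common scaling and in-plane rotation, and $SO(3)$ acts transitively on orthonormal 2-frames of $\mathbb R^3$ (note $\alpha=0$ would force $\beta=0$ and hence a degenerate plane, so both vectors are nonzero). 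Two points you should make explicit: the rigidity step requires completeness, which holds here by compactness of $S^2$ and of the sphere in Example \ref{bolton:ex2}; and for an a priori only immersed sphere the rigidity argument identifies $M$ with a covering of the model surface, which is trivial since both are simply connected. With those remarks your argument is complete, and on the uniqueness half it is arguably more self-contained than the paper's sketch, trading Hopf-based uniqueness in $\mathbb R^3$ for a homogeneous-space argument; the price is that it only applies once total geodesy is known, whereas the paper's correspondence argument in principle tracks the full surface data.
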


\begin{proof} 
 Since $S^2$ admits no non-zero holomorphic differentials, for an almost complex 2-sphere  $\Lambda dz^2=0$. Hence the corresponding  solution $\epsilon$ of the $H$-surface equation has constant mean curvature. By Hopf's theorem, this must be the round sphere, and since it has mean curvature  $-2/\sqrt 3$ it has radius $\sqrt 3/2$ and hence constant sectional curvature~$4/3$. 

However, as noted in Theorem \ref{bolton:accmc}, the metric induced on $S^2$ by  $\epsilon$ is half that induced on the almost complex surface, so this latter has constant sectional curvature $K=2/3$. 

Theorem \ref{bolton:L} shows that  $P$ maps the tangent spaces of the surface to normal spaces, so, by Lemma \ref{bolton:lemma1}, the sectional curvature $\tilde K$ is also equal to $2/3$. 
It now follows from Lemma \ref{bolton:lemma2} that the almost complex surface is totally geodesic. 

We note that changing the almost complex surface by an isometry~$(p,q) \mapsto (apc^{-1},bqc^{-1})$,
where~$a,b,c$ are unit quaternions, corresponds to changing~$\alpha$ and~$\beta$ to~$c \alpha c^{-1}$
and~$c \beta c^{-1}$ respectively.
Since $S^3$ is the double cover of~$SO(3)$, $\alpha$ and $\beta$ change by a rotation in~$\mathbb{R}^3$, 
and thus $\epsilon$ changes by an isometry of $\mathbb{R}^3$.

The uniqueness part then follows essentially from this fact and the uniqueness
 of constant mean curvature 2-spheres in $\mathbb R^3$.
\end{proof}

In \cite{bolton:bddv}, we use similar techniques to classify other types of almost complex surface in $S^3\times S^3$.

\begin{theorem} Let $M$ be an almost complex surface in $S^3\times S^3$ such that  $P(TM) = TM$. Then $M$  may be  obtained by applying a nearly K\"ahler isometry of $\nks$ to the flat totally geodesic torus example given in Example \ref{bolton:ex1}.
\end{theorem}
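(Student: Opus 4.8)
My plan is to reduce the condition $P(TM)=TM$ to a pointwise algebraic condition on the frame data $\alpha,\beta$ of \eqref{bolton:abgd}, and then to integrate the resulting structure equations. The first step is to carry $P$ into the frame. Identifying $T_{(p,q)}(\nks)$ with $T_{(1,1)}(\nks)=\mathbb R^3\times\mathbb R^3$ by left translation, both $J$ and $P$ act by the formulas that defined them at $(1,1)$ in Sections \ref{bolton:nks3s3} and the curvature section; thus $P$ is the swap $(X,Y)\mapsto(Y,X)$, the tangent plane $TM$ is spanned by $(\alpha,\gamma)$ and $(\beta,\delta)$, and the almost complex relation reads $J(\alpha,\gamma)=(\beta,\delta)$, reproducing \eqref{bolton:gd}. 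With $\gamma,\delta$ given by \eqref{bolton:gd}, a short linear-algebra computation shows that $P(\alpha,\gamma)=(\gamma,\alpha)$ lies in $\mathrm{span}\{(\alpha,\gamma),(\beta,\delta)\}$ if and only if $\alpha$ and $\beta$ are linearly dependent. Hence I expect $P(TM)=TM$ to be equivalent to $\alpha\times\beta=0$, in which case $\alpha,\beta,\gamma,\delta$ all lie along a single line in $\mathbb R^3$ and $TM$ is exactly the swap-invariant plane $\langle\alpha\rangle\times\langle\alpha\rangle$.

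Granting $\alpha\times\beta=0$, equations \eqref{bolton:a-b} and \eqref{bolton:a+b} lose their right-hand sides and become $\alpha_v=\beta_u$ and $\alpha_u=-\beta_v$, which are precisely the Cauchy--Riemann equations asserting that $w:=\alpha-i\beta$ is a holomorphic $\mathbb C^3$-valued function of $z=u+iv$. Since $w\times\bar w=2i\,\alpha\times\beta=0$, the holomorphic line $[w(z)]$ lies in the totally real $\mathbb{RP}^2\subset\mathbb{CP}^2$ and is therefore constant; equivalently, writing $w=g\,n$ with $n$ a real unit vector, holomorphy forces (after dotting $w_{\bar z}=0$ with $n$) that $g$ be a holomorphic scalar and $n$ a constant unit imaginary quaternion. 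I would record this rigidity as a small lemma, since it is the step that pins down the geometry.

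It then remains to reconstruct the surface and identify it. Because $\alpha,\beta,\gamma,\delta$ are all real multiples of the fixed imaginary quaternion $n$, with $n^2=-1$, the equations \eqref{bolton:abgd} integrate to great circles $p=p_0\exp(\Phi n)=p_0(\cos\Phi+n\sin\Phi)$ and $q=q_0\exp(\Psi n)$, where $\Phi,\Psi$ are real functions whose differentials are the relevant real multiples of $g\,dz$. The induced metric \eqref{bolton:indmet} equals $|g|^2(du^2+dv^2)$, whose conformal factor $\log|g|=\mathrm{Re}\log g$ is harmonic, so $K=0$; since $P(TM)=TM$, Lemma \ref{bolton:lemma1} gives $\tilde K=0$, and Lemma \ref{bolton:lemma2} then shows $M$ is totally geodesic. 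Finally, applying a nearly K\"ahler isometry $(p,q)\mapsto(apc^{-1},bqc^{-1})$ to rotate $n$ to $i$ and to normalise $p_0=q_0=1$, and reparametrising by the holomorphic change of coordinate $\zeta=\int g\,dz$ so that the frame becomes $\alpha=i,\ \beta=0$, brings $p$ and $q$ to the form $e^{iu}$ and $e^{i(u/2-\sqrt3 v/2)}$, a linear reparametrisation of the immersion in Example \ref{bolton:ex1}.

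The main obstacle is the first step: correctly transporting $P$ and $J$ into the left-translated frame and then verifying that $P(TM)=TM$ is equivalent to the single scalar condition $\alpha\times\beta=0$, rather than to the perpendicularity condition of Theorem \ref{bolton:L}(iii). The only other nontrivial input is the rigidity lemma for holomorphic maps into the totally real $\mathbb{RP}^2$. Once these are in hand, the integration, the flatness computation and the matching with Example \ref{bolton:ex1} are routine, requiring only the usual care over the isolated zeros of $g$ (the non-immersion points) and over simple-connectedness when integrating the closed one-forms, which is handled by passing to the universal cover.
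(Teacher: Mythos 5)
Your proposal is correct, but it necessarily takes a different route from the paper's: this note does not actually prove the torus theorem, it defers the proof to \cite{bolton:bddv} and indicates that the argument there is tensorial, using the covariant derivatives $\tilde\nabla J$ and $\tilde\nabla P$ of the almost complex and almost product structures. Your argument instead stays inside the quaternionic frame of the note. The key reductions all check out: under left translation $P$ is indeed the swap $(X,Y)\mapsto(Y,X)$, and if $\alpha,\beta$ were independent then $(\gamma,\alpha)\in TM$ would force $a=\tfrac12$, $b=\tfrac{\sqrt3}{2}$ from the first slot and then $\alpha=-\tfrac12\alpha+\tfrac{\sqrt3}{2}\beta$ from the second, a contradiction; conversely $\alpha\times\beta=0$ makes $TM=\langle n\rangle\times\langle n\rangle$ (the two spanning vectors stay independent since the relevant determinant is $-\tfrac{\sqrt3}{2}(|\alpha|^2+|\beta|^2)\neq 0$). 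Then \eqref{bolton:a-b} and \eqref{bolton:a+b} lose their right-hand sides, $w=\alpha-i\beta$ is holomorphic, and your rigidity lemma is sound (indeed $g_{\bar z}=w_{\bar z}\cdot n=0$ and then $n_{\bar z}=0$ since $g\neq0$); note that by \eqref{bolton:indmet} one has $|g|^2=|\alpha|^2+|\beta|^2>0$ everywhere because $\phi$ is an immersion, so your caution about zeros of $g$ is vacuous. The integration to $(p_0e^{\Phi n},q_0e^{\Psi n})$, flatness via harmonicity of $\log|g|$, totally geodesic via Lemmas \ref{bolton:lemma1} and \ref{bolton:lemma2}, and the normalisation with $cnc^{-1}=i$, $a=cp_0^{-1}$, $b=cq_0^{-1}$ and the coordinate $\zeta=\int g\,dz$ are all routine and correct. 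It is worth observing why your elementary route is available here while the paper switches methods: when $\alpha\times\beta=0$ the Wente correspondence of Section 6 degenerates ($\tilde\alpha\times\tilde\beta$ is a multiple of $\alpha\times\beta$, so $\epsilon$ collapses into a line and is no longer a CMC surface), so the $H$-surface/Hopf argument used for 2-spheres cannot apply; your holomorphic-frame analysis is exactly the degenerate-case substitute and gives the explicit classification directly, whereas the covariant-derivative machinery of \cite{bolton:bddv} is less explicit but also powers the stronger parallel second fundamental form theorem.
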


\begin{theorem} If $M$ is an  almost complex surface in $S^3\times S^3$ with parallel second fundamental form, then $M$ is totally geodesic. Locally at least,  $M$  may be  obtained by applying a nearly K\"ahler isometry of $\nks$ to the immersion given in Example \ref{bolton:ex1} or  Example \ref{bolton:ex2}. 
\end{theorem}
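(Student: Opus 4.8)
The plan is to combine the Codazzi equation with the explicit curvature tensor of Section~2 to first pin down the possible configurations of $P(TM)$, and then treat the resulting cases separately. Since $M$ is minimal (Lemma~\ref{bolton:lemma2}) and $h$ is assumed parallel, the Codazzi equation reduces to the statement that $\bigl(\tilde R(X,Y)Z\bigr)^\perp=0$ for all $X,Y,Z$ tangent to $M$, where $(\,\cdot\,)^\perp$ denotes the component normal to $M$. I would feed tangent vectors into the curvature formula. Because $M$ is almost complex, $J$ preserves $TM$, so every term on the first two lines of $\tilde R$ stays tangent and contributes nothing to the normal part; only the $P$-terms on the third line can survive. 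Writing $e_2=Je_1$ for a unit tangent vector $e_1$ and using $PJ=-JP$ together with the $J$-invariance of $g$, a short computation gives $\bigl(\tilde R(e_1,e_2)e_1\bigr)^\perp=\tfrac{2}{3}\bigl(q\,N+p\,JN\bigr)$, where $N=(Pe_1)^\perp$, $p=g(Pe_1,e_1)$ and $q=g(Pe_1,e_2)$.

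Since $N$ and $JN$ are orthogonal, the vanishing of this expression forces either $N=0$ or $p=q=0$. In the first case $Pe_1$ is tangent, whence $Pe_2=-J(Pe_1)$ is tangent as well and $P(TM)=TM$; in the second case $Pe_1$, and therefore also $Pe_2$, is normal, so $P(TM)\perp TM$. Thus at every point either $P(TM)=TM$ or $P(TM)\perp TM$. To globalise this dichotomy I would invoke Theorem~\ref{bolton:L}: the locus where $P(TM)\perp TM$ is exactly the zero set of the holomorphic differential $\Lambda\,dz^2$, hence is either all of $M$ or isolated; by continuity of the closed condition $P(TM)=TM$, a connected $M$ satisfies one alternative throughout.

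If $P(TM)=TM$ everywhere, the preceding theorem already identifies $M$, up to a nearly K\"ahler isometry, with the flat totally geodesic torus of Example~\ref{bolton:ex1}. If instead $P(TM)\perp TM$ everywhere, then $\tilde K\equiv 2/3$ by Lemma~\ref{bolton:lemma1}, while parallelism of $h$ makes $|h|^2$ constant, so the Gauss equation $K=\tilde K-\tfrac12|h|^2$ shows $K$ is a constant at most $2/3$. Here $\Lambda\,dz^2=0$, so Theorem~\ref{bolton:accmc} produces a constant mean curvature immersion $\epsilon:M\to\mathbb R^3$ with $H=-2/\sqrt3$ and induced curvature $2K$, again constant. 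A constant mean curvature surface of constant Gauss curvature has constant principal curvatures and is therefore isoparametric, hence, as $H\neq 0$, a round sphere or a circular cylinder.

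The main obstacle is to exclude the cylinder, that is, to upgrade ``$K$ constant'' to ``$K=2/3$'', equivalently $|h|^2=0$. Constancy of $|h|^2$ alone does not suffice: the almost complex surface corresponding to a cylinder has $|h|^2=4/3$ constant yet is not totally geodesic, so it is precisely the full force of $\nabla h=0$, and not merely its Codazzi consequence, that must be used. I would extract this from the Ricci identity for the parallel tensor $h$, which gives $R^\perp(e_1,e_2)h(Y,Z)=h(R(e_1,e_2)Y,Z)+h(Y,R(e_1,e_2)Z)$ and hence, writing $\xi=h(e_1,e_1)$ and $\eta=h(e_1,e_2)$, the relations $R^\perp(e_1,e_2)\xi=2K\eta$ and $R^\perp(e_1,e_2)\eta=-2K\xi$. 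Comparing this with the value of $R^\perp(e_1,e_2)$ supplied by the Ricci equation, in which the ambient term $g(\tilde R(e_1,e_2)\xi,\eta)$ is computed from the Section~2 formula with $\xi,\eta$ now normal and $P(TM)\perp TM$, yields an algebraic identity in $|\xi|^2,|\eta|^2,g(\xi,\eta)$ and $K$ that I expect to force $|h|^2=0$. Once $M$ is totally geodesic with $K=\tilde K=2/3$, the uniqueness argument of Section~7 identifies it, up to a nearly K\"ahler isometry of $\nks$, with the constant curvature sphere of Example~\ref{bolton:ex2}, while the torus case is already totally geodesic.
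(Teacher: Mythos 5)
Your Codazzi analysis and the globalisation step are correct and verifiable against the paper. With $\nabla h=0$ the Codazzi equation indeed gives $(\tilde R(X,Y)Z)^\perp=0$; feeding $U=e_1$, $V=e_2=Je_1$, $W=e_1$ into the Section~2 curvature tensor, the first two lines are tangent and the third line yields exactly $\tfrac{2}{3}\bigl(q\,N+p\,JN\bigr)$ with $N=(Pe_1)^\perp$, so your pointwise dichotomy $P(TM)=TM$ or $P(TM)\perp TM$ is sound (since $g(JN,N)=0$). The globalisation via the zero set of $\Lambda\,dz^2$ together with continuity of $p^2+q^2$ works, and quoting the $P(TM)=TM$ classification theorem is legitimate and non-circular, since that theorem does not assume $h$ parallel. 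Your reduction of the perpendicular case to a CMC surface of constant Gauss curvature, hence a round sphere or circular cylinder, is also correct, and your observation that constancy of $|h|^2$ alone cannot exclude the cylinder (which would correspond to $K=0$, $|h|^2=4/3$) correctly identifies where the real difficulty lies.

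But precisely at that point your proposal has a genuine gap: the cylinder exclusion is only an expectation, and the identity you predict cannot close in the variables you list. For an almost complex surface one has $(\tilde\nabla_X J)Y=0$ for tangent $X,Y$ (from $G(X,X)=0$ and $G(X,JY)=-JG(X,Y)$, $G$ vanishes on $J$-invariant planes), so $\eta=J\xi$, $g(\xi,\eta)=0$, $|\eta|=|\xi|$; your three unknowns collapse to one, but the ambient term $g(\tilde R(e_1,e_2)\xi,J\xi)$ is \emph{not} expressible in $|\xi|^2$ and $K$ alone: computing it from the Section~2 formula with $P(TM)\perp TM$ gives (up to convention-dependent signs) $-\tfrac16|\xi|^2+\tfrac23|\xi^P|^2$, where $\xi^P$ is the component of $\xi$ in the plane $P(TM)\subset NM$ --- a new unknown absent from your list. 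Comparing with the Ricci identity and $K=\tfrac23-2|\xi|^2$ then produces a single scalar relation of the schematic form $2|\xi|^4-c\,|\xi|^2-\tfrac23|\xi^P|^2=0$ with $c>0$, which admits nonzero roots (e.g.\ with $\xi^P=0$ one gets $|\xi|^2=c/2\neq0$), so it does not force $|h|^2=0$. To eliminate these spurious solutions one must differentiate quantities such as $g(\xi,Pe_i)$ along $M$, which requires control of $\tilde\nabla P$ (and $\tilde\nabla J$) on the ambient space --- exactly the machinery the paper says the actual proof in \cite{bolton:bddv} uses, and which your sketch never engages. Note also that the paper itself gives no proof of this theorem, deferring to \cite{bolton:bddv} with precisely that hint; so while your route through the CMC correspondence is attractive and parallels the paper's Section~7 strategy for spheres, the decisive step remains unproven as you have set it up. (A minor point: your signs in $R^\perp(e_1,e_2)\xi=2K\eta$, $R^\perp(e_1,e_2)\eta=-2K\xi$ are convention-dependent and harmless.)
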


The proofs of these theorems are a little harder than those we have discussed in this note, and involve the covariant derivatives (using the Levi-Civit\/a connection $\tilde\nabla$ of the nearly K\"ahler structure on $S^3\times S^3$) of the almost complex structure $J$ and of the almost product tensor $P$.

\nocite{*}
\bibliographystyle{amsplain}
\bibliography{nearlykahlerbib}

\end{document}